\newtheorem{theorem}{Theorem}[section]
\newtheorem{theo}[theorem]{Theorem}
\theoremstyle{defi}
\newtheorem{pro}[theorem]{Problem}
\theoremstyle{rem}
\numberwithin{equation}{section}
\def\p{\partial}
\def\R{\mathbb{R}}
\def\C{\mathbb{C}}
\def\i{\sqrt{-1}}
\def\t{\triangle}
\begin{document}
\title[Donaldson's equation and the complex Monge-Amp\`ere equation]
{Entire solutions of Donaldson's equation}
\author{Weiyong He}
\address{Department of Mathematics, University of Oregon, Eugene, Oregon, 97403}
\email{whe@uoregon.edu}

\thanks{The author is supported in-part by a start-up grant of University of Oregon.}

\subjclass[2010] {Primary 35J60, 35J96.}

\date{}

\keywords{Donaldson's equation, entire solution}

\begin{abstract}We shall construct infinitely many special entire solutions to  Donaldson's equation. We shall also prove a Liouville type theorem for entire solutions of Donaldson's equation. We believe that  all entire solutions of Donaldson's equation have the form of the examples constructed in the paper. 
\end{abstract}

\maketitle

\section{Introduction}
Donaldson \cite{Donaldson} introduced an interesting differential operator when he set up a geometric structure for the space of volume forms on  compact Riemannian manifolds. The Dirichlet problems for Donaldson's operator are considered in \cite{Chen-He, He}.  In this note we shall consider this operator on Euclidean spaces.

For $(t, x)\in\Omega\subset \mathbb{R}\times \R^n $ ($n\geq 1$), let  $u(t, x)$ be a smooth function  such that $\t u>0, u_{tt}>0$. We use $\nabla u, \t u$ to denote derivatives with respect to $x$ and $u_t=\p_t u, u_{tt}=\p^2_tu$  to denote derivatives with respect to $t$. Define
a differential operator $Q$ by
\[
Q(D^2u)=u_{tt}\t u-|\nabla u_t|^2.
\]
This operator is strictly elliptic when $u_{tt}>0, \t u>0 $ and $Q(D^2 u)>0$.  When $n=1$, then \[Q(D^2u)=u_{tt}u_{xx} -u_{xt}^2\]
is a real Monge-Amp\`ere operator. When $n=2$, $Q$ can be viewed as a special case of the complex Monge-Amp\`ere operator. In $x$ direction, we identify $\R^2=\C$ with a coordinate $w$.  In $t$ direction, we take a product by $\R$ with a coordinate $s$ and let $z=t+\i s$. We extend $u$ on $\R\times \R^2$ to $\R^4=\C^2$ by
$u(z, w)=u(t, x)$. Then
\[
Q(D^2u)=4(u_{z\bar z}u_{w\bar w}-u_{z\bar w}u_{w\bar z})
\]
is a complex Monge-Amp\`ere operator.

In this paper we shall consider  entire solutions $u: \R\times \R^n\rightarrow \R$ of 
\begin{equation}\label{E-1}
Q(D^2u)=1.
\end{equation}
One celebrated result, proved by J\"orgens \cite{Jo} in dimension 2 and by Calabi  \cite{Calabi} and Pogorelov \cite{Po} in higher dimensions, is that the only convex  solutions of the real Monge-Amp\`ere equation
\begin{equation}\label{E-2}
\det(f_{ij})=1
\end{equation} on the whole of $\R^n$ are the obvious ones: quadratic functions.
 \begin{theo}[ J\"orgens; Calabi; Pogorelov] \label{T-1}Let $f$ be a global convex viscosity solution of \eqref{E-2} in whole $\R^n$. Then $f$ has to be a quadratic function.
 \end{theo}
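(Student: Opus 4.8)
The plan is to reduce to a smooth strictly convex solution and then run an affine-invariant zoom-out argument based on interior second-order estimates for the Monge--Amp\`ere equation.

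\emph{Reduction to a smooth solution.} First I would invoke Caffarelli's regularity theory: a globally convex viscosity (equivalently, Aleksandrov) solution of $\det(D^2 f)=1$ on all of $\R^n$ is automatically strictly convex and $C^\infty$. The delicate point is strict convexity --- were $f$ affine along some line, convexity would force it to be affine on an entire affine subspace, and, since the Monge--Amp\`ere measure of $f$ has density $1$, one checks this is incompatible with $f$ being an \emph{entire} solution; granted strict convexity, interior $C^{2,\a}$, hence $C^\infty$, follows from Caffarelli's interior estimates together with Schauder theory and bootstrapping. So from now on $f\in C^\infty$ and $D^2 f>0$.

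\emph{Affine normalization of the sections.} Subtracting an affine function and translating, assume $f\ge 0$ and $f(0)=0$, and consider the sections $S_h:=\{f<h\}$. A standard application of Aleksandrov's maximum principle shows each $S_h$ is a bounded convex open set (an unbounded section would force $\det D^2 f$ to degenerate) and that $c(n)h^{n/2}\le|S_h|\le C(n)h^{n/2}$. By John's lemma, and after absorbing the volume into the normalization, there is for each $h$ an affine map $A_h$ with linear part $L_h$ such that $|\det L_h|=h^{-n/2}$ and $B_{r}\subseteq A_h(S_h)\subseteq B_{R}$ for dimensional constants $r,R$. Put $g_h(y):=\tfrac1h f(A_h^{-1}y)$ on $\Omega_h:=A_h(S_h)$. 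Then $\det D^2 g_h\equiv 1$, $0\le g_h<1$, $g_h\equiv 1$ on $\partial\Omega_h$; comparing with a radial quadratic barrier gives $\min_{\Omega_h}g_h=g_h(A_h0)\le 1-\delta(n)$, and then Aleksandrov's estimate places the minimum point $A_h 0$ at distance at least $2\eta(n)>0$ from $\partial\Omega_h$.

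\emph{Uniform interior estimates and conclusion.} Pogorelov's interior second-derivative estimate, combined with the Evans--Krylov theorem (or, in one step, Caffarelli's interior $C^{2,\a}$ estimate), applied to $g_h$ on $\Omega_h$ around the point $A_h 0$, yields a bound $[D^2 g_h]_{C^{\a}(B_{\eta}(A_h 0))}\le C(n)$ that is \emph{uniform in $h$}. Differentiating $g_h(y)=\tfrac1h f(A_h^{-1}y)$ gives $D^2 f(x)=h\,L_h^{\top}D^2 g_h(A_h x)\,L_h$. Fix $x,x'\in\R^n$. The crucial step --- and what I expect to be the main obstacle --- is to control the geometry of the sections uniformly in $h$: one must show their eccentricity stays bounded as $h\to\infty$, i.e.\ $\|L_h\|\,\|L_h^{-1}\|\le C(n)$; combined with $|\det L_h|=h^{-n/2}$ this forces $\|L_h\|\le C(n)h^{-1/2}$ and $h\|L_h\|^2\le C(n)$. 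This is exactly the content of Caffarelli's theory of sections for $\det D^2 f=1$ (comparability and engulfing of sections at comparable heights, which for an entire solution prevents the sections from becoming infinitely eccentric). Granting it, for $h$ large both $A_h x$ and $A_h x'$ lie in $B_{\eta}(A_h 0)$ and $|A_h x-A_h x'|\le\|L_h\|\,|x-x'|\le C(n)h^{-1/2}|x-x'|$, whence
\[
\|D^2 f(x)-D^2 f(x')\|\ \le\ h\,\|L_h\|^{2}\,[D^2 g_h]_{C^{\a}}\,|A_h x-A_h x'|^{\a}\ \le\ C(n)\,h\,\|L_h\|^{2+\a}\,|x-x'|^{\a}\ \le\ C(n)\,h^{-\a/2}\,|x-x'|^{\a}\ \longrightarrow\ 0.
\]
Thus $D^2 f$ is constant, so $f$ is a quadratic polynomial, proving the theorem.

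\emph{Alternative routes.} I would keep two classical arguments in mind as cross-checks. (i) Calabi's affine-geometric proof: when $\det D^2 f=1$ on all of $\R^n$ the Hessian metric $g=D^2 f$ is flat and complete, and a Bochner-type estimate forces the Fubini--Pick cubic form $\partial^3 f$ to vanish. (ii) J\"orgens's original proof for $n=2$: a partial Legendre transform converts the equation into the Cauchy--Riemann equations, and the resulting entire holomorphic function, of at most linear growth by global convexity, must be affine.
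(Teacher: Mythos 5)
The paper does not actually prove Theorem \ref{T-1}: it is quoted as the classical J\"orgens--Calabi--Pogorelov theorem and referred to \cite{Jo}, \cite{Calabi}, \cite{Po}, so there is no internal argument to compare yours with; your sketch has to be judged against the literature. What you wrote is, in outline, the modern Caffarelli--Li proof, which is indeed the natural route for the statement as phrased (viscosity solutions --- the original proofs of J\"orgens, Calabi and Pogorelov assume smoothness, and the reduction from viscosity to smooth strictly convex solutions is exactly Caffarelli's regularity theory, as you say). The skeleton --- normalize sections $S_h$, Aleksandrov estimates for volume and for the location of the minimum, Pogorelov's interior Hessian bound plus Evans--Krylov/Caffarelli $C^{2,\alpha}$ uniformly in $h$, then the blow-down showing $[D^2f]_{C^{\alpha}}\le C h^{-\alpha/2}\to 0$ --- is correct.

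The one weak point is precisely the step you flag. A uniform-in-$h$ eccentricity bound for the sections of an entire solution is not an off-the-shelf lemma of the Caffarelli--Guti\'errez section theory (engulfing and comparability control sections at comparable heights, not the limit $h\to\infty$), and quoting it as a black box is close to assuming the theorem. But the gap closes inside your own setup: Pogorelov's estimate together with $\det D^2 g_h=1$ gives $c(n)I\le D^2 g_h\le C(n)I$ near the minimum point $A_h0$, and evaluating the identity $D^2f(0)=h\,L_h^{\top}D^2 g_h(A_h0)\,L_h$ at the \emph{fixed} point $0$ shows that $h\,L_h^{\top}L_h$ is bounded above and below by positive constants depending only on $n$ and $D^2f(0)$; hence $h\|L_h\|^2\le C(f)$ and $\|L_h^{-1}\|\le C(f)h^{1/2}$, which is all your final display needs (the constants now depend on $f$, which is harmless since you only need the H\"older seminorm to tend to $0$). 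Two small points of hygiene: the strict-convexity reduction is cleaner via Caffarelli's result that the contact set $\{f=\ell\}$ of a non-strictly-convex solution has no extremal point, hence for an entire solution contains a full line, which forces the Monge--Amp\`ere measure to degenerate --- ``affine on an entire affine subspace'' is looser than what is true; and Pogorelov's pointwise estimate should be applied to $g_h-1$, which vanishes on $\partial\Omega_h$, at points where $g_h\le 1-\delta(n)$. Your alternative routes (Calabi's differential-geometric argument, J\"orgens's partial Legendre/holomorphic-function argument in $n=2$) are exactly the proofs of the cited authors, but note they require a priori smoothness, so they do not by themselves cover the viscosity statement of the theorem.
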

One can also ask similar questions for the complex Monge-Amp\`ere equations for plurisubharmonic functions. Let $v: \C^n\rightarrow \R$ be a strictly plurisubharmonic function such that $(v_{i\bar j})>0$, which satisfies
\begin{equation}\label{E-3}
\det(v_{i\bar j})=1.
\end{equation}

The similar results as Theorem \ref{T-1} for  the complex Monge-Amp\`ere equation \eqref{E-3} or  Donaldson's equation \eqref{E-1} ($n>1$) are not known. 
For the complex Monge-Amp\`ere equation, LeBrun \cite{LeBrun} investigated the Euclidean Taub-NUT metric constructed by 
 Hawking \cite{Hawking} and proved that it is a K\"ahler Ricci-flat metric on $\C^2$ but non-flat metric. 
His example provides a nontrivial entire solution of the complex Monge-Amp\`ere equation. 
We shall construct  infinitely many solutions for Donaldson's equation \eqref{E-1}, which are nontrivial solutions in the sense that $u_{tt}$ is a constant, but $\t u, \nabla u_t$ are both not constant . However, when $n=2$, the K\"ahler metrics corresponding to these examples are  the Euclidean metric on $\C^2$.   
We shall  prove a Liouville type theorem for Donaldson's equation \eqref{E-1}, which says $u_{tt}$ has to be a constant provided some restriction on $u_{tt}$. Our proof relies on a transformation introduced by Donaldson \cite{Donaldson}. We then ask if all solutions of \eqref{E-1} satisfy that $u_{tt}$ is a constant;  this would characterize all entire solutions of \eqref{E-1}  if  confirmed.

\vspace{3mm}

{\bf Acknowledgement:} I am grateful to Prof. Xiuxiong Chen and Prof. Jingyi Chen for constant support and encouragements. I  benefit from conversations with Prof. Pengfei Guan about the complex Monge-Amp\`ere equations and I would like to thank him.   I would also like to thank the referee for valuable suggestions and comments, and for spotting a minor error in the computation of the examples in Section 2 in a previous version of the paper.

\section{Examples of entire solutions}
In this section we shall construct infinitely many nontrivial solutions of \eqref{E-1} and \eqref{E-3}.  
First we consider \eqref{E-1}.  Let $u_{tt}=2a$  for some $a>0$ and $u(0, x)=g(x), u_t(0, x)=b(x)$. Then
\begin{equation}\label{E-2-1}
u(t, x)=at^2+tb(x)+g(x).
\end{equation}
If $u$ solves \eqref{E-1}, then 
\[
2a(t\t b+\t g)-|\nabla b|^2=1.
\]
It follows that
\[
\t b=0, \t g=\frac{1}{2a}\left(1+|\nabla b|^2 \right). 
\]
So we shall construct the examples as follows. 
Let $b=b(x_1, x_2, \cdots, x_n)$ be a harmonic function in $\R^n$. Define
\[
h(x)=(1+|\nabla b|^2)/2a.
\]
Consider the following equation for $g(x)$
\begin{equation}\label{E-2-3}
\t g=h(x).
\end{equation}
Note that we can write $g=b^2(x)/4a+f$ for some function $f$ such that $\t f=1$. We can summarize our results above as follows.
\begin{theo}Let $u$ be the form of \eqref{E-2-1} such that $b$ is a harmonic function and $g$ satisfies \eqref{E-2-3}, then $u$ is an entire solution of \eqref{E-1}. Moreover, any entire solution of \eqref{E-1} with $u_{tt}=const$ has the form of \eqref{E-2-1}. 
\end{theo}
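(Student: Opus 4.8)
The plan is to treat both halves of the statement as verifications: the first by direct substitution, the second by substitution together with one elementary observation about the $t$-dependence.

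\textbf{Sufficiency.} Given $u=at^2+tb(x)+g(x)$ with $\t b=0$ and $g$ solving \eqref{E-2-3}, I would simply compute $Q(D^2u)=u_{tt}\t u-|\nabla u_t|^2$. One has $u_{tt}=2a$, $u_t=2at+b(x)$ so that $\nabla u_t=\nabla b$ and $|\nabla u_t|^2=|\nabla b|^2$, and $\t u=t\t b+\t g=\t g$ because $b$ is harmonic. By \eqref{E-2-3}, $\t g=h=(1+|\nabla b|^2)/2a$, hence $u_{tt}\t u=2a\cdot h=1+|\nabla b|^2$ and therefore $Q(D^2u)=1$. I would also record that $u$ meets the standing conditions under which \eqref{E-1} is posed, namely $u_{tt}=2a>0$ and $\t u=(1+|\nabla b|^2)/2a>0$, and that such $u$ exist globally on $\R\times\R^n$: for any entire harmonic $b$ the Poisson equation \eqref{E-2-3} admits a global smooth solution, e.g. $g=b^2/4a+f$ with $f$ a suitable quadratic, using $\t(b^2)=2|\nabla b|^2$.

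\textbf{Necessity.} Suppose $u$ is an entire solution of \eqref{E-1} with $u_{tt}$ constant. The strict ellipticity of $Q$, equivalently the positivity $u_{tt}>0$ assumed throughout, forces this constant to be positive, say $u_{tt}=2a$ with $a>0$. Integrating twice in $t$ and setting $b(x):=u_t(0,x)$, $g(x):=u(0,x)$ gives $u(t,x)=at^2+tb(x)+g(x)$, which is \eqref{E-2-1}. Substituting back into \eqref{E-1} yields
\[
2a\bigl(t\,\t b(x)+\t g(x)\bigr)-|\nabla b(x)|^2=1 \qquad\text{for all }(t,x)\in\R\times\R^n.
\]
The left-hand side is affine in $t$ while the right-hand side is constant, so the coefficient of $t$ vanishes identically: $\t b\equiv 0$, i.e. $b$ is harmonic. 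The remaining $t$-independent part reads $2a\,\t g-|\nabla b|^2=1$, that is $\t g=(1+|\nabla b|^2)/2a=h$, which is \eqref{E-2-3}. Thus $u$ has exactly the claimed form.

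I do not anticipate a genuine obstacle: the computation is short. The only two points needing a line of care are that the constant value of $u_{tt}$ must be positive (which is built into the notion of a solution of \eqref{E-1} through the ellipticity/positivity hypotheses), and that matching the coefficients in the affine-in-$t$ identity above is legitimate precisely because that identity holds for every real $t$.
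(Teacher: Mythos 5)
Your proposal is correct and follows essentially the same route as the paper: direct substitution of $u=at^2+tb(x)+g(x)$ into \eqref{E-1}, matching the coefficient of $t$ to get $\t b=0$ and the constant part to get \eqref{E-2-3}, together with the observation (also in the paper, via $g=b^2/4a+f$ with $\t f=1$) that such $g$ exist globally. The only additions you make are the explicit checks of positivity ($u_{tt}=2a>0$, $\t u>0$), which the paper leaves implicit; nothing further is needed.
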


When $n=2$, the examples also provide solutions of the complex Monge-Amp\`ere equation \eqref{E-3}. Actually let $u(z, w): \C^2\rightarrow \R $ be a solution of \eqref{E-3}. If $u_{z\bar z}=a$ for some constant $a>0$, it is not hard to derive that 
\begin{equation}\label{E-2-4}
u(z, w)=az\bar z+f(z, \bar z)+zb(w, \bar w)+\bar z\bar b(w, \bar w)+g(w, \bar w)
\end{equation}
such that
\[
\frac{\p^2 f}{\p z\p \bar z}=\frac{\p^2 b}{\p w\p \bar w}=0, \frac{\p^2 g}{\p w\p\bar w}=\frac{1}{a}\left(1+\left|\frac{\p b}{\p \bar w}\right|^2\right).
\]
 However these examples are all trivial solutions of the complex Monge-Amp\`ere equation in the sense that the corresponding K\"ahler metrics are flat.

\section{A Liouville Type Theorem}
In this section we shall prove a Liouville type result for solutions of \eqref{E-1}.  We shall describe a transformation  introduced by Donaldson \cite{Donaldson}, which relates the solutions of \eqref{E-1} with harmonic functions.  Using this transformation, Theorem \ref{T-2} follows from the standard Liouville theorem for positive harmonic functions.  
\begin{theo}\label{T-2}Let $u$ be a solution of \eqref{E-1} with $u_{tt}>0$. For any $x\in \R^n$, if $u_{tt}(t, x)dt^2$ defines a complete metric on $\R\times \{x\}$, then 
$u_{tt}$ is a constant. In particular, it has the form of \eqref{E-2-1} such that $b$ is a harmonic function and $g$ satisfies \eqref{E-2-3}.
\end{theo}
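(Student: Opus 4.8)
The plan is to apply the partial Legendre transform of Donaldson in the $t$ variable, which should convert \eqref{E-1} into Laplace's equation, and then invoke the classical Liouville theorem. Since $u_{tt}>0$, for each fixed $x$ the map $t\mapsto\tau:=u_t(t,x)$ is strictly increasing, hence a diffeomorphism of $\R$ onto an open interval $I_x=(a_x,b_x)$ with $a_x=\lim_{t\to-\infty}u_t(t,x)$ and $b_x=\lim_{t\to+\infty}u_t(t,x)\in[-\infty,+\infty]$. Set $D=\{(\tau,x):x\in\R^n,\ \tau\in I_x\}$; then $(t,x)\mapsto(u_t(t,x),x)$ is a diffeomorphism of $\R^{n+1}$ onto the open set $D$, since its Jacobian determinant is $u_{tt}>0$. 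On $D$ I would define
\[
v(\tau,x)=t\tau-u(t,x),\qquad \tau=u_t(t,x),
\]
and record the usual transform identities $v_\tau=t$, $v_{x_i}=-u_{x_i}$, $v_{\tau\tau}=1/u_{tt}$, $v_{\tau x_i}=-u_{tx_i}/u_{tt}$, and $v_{x_ix_j}=u_{tx_i}u_{tx_j}/u_{tt}-u_{x_ix_j}$. Summing the last identity over $i=j$ gives $\t_x v=|\g u_t|^2/u_{tt}-\t u$, so $u_{tt}\,\t_x v=|\g u_t|^2-u_{tt}\t u=-Q(D^2u)=-1$ by \eqref{E-1}; dividing by $u_{tt}$ and using $v_{\tau\tau}=1/u_{tt}$ this is exactly
\[
v_{\tau\tau}+\t_x v=0,
\]
i.e. $v$ is harmonic on $D$. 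This is Donaldson's transformation relating \eqref{E-1} to harmonic functions.

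The heart of the matter is to show that the completeness hypothesis forces $D=\R^{n+1}$. Under the transform the line $\R\times\{x\}$ with the metric $u_{tt}\,dt^2$ is carried isometrically onto $I_x$ with the metric $v_{\tau\tau}\,d\tau^2$, because $d\tau=u_{tt}\,dt$ and $u_{tt}v_{\tau\tau}=1$; moreover $v_\tau=t$, so on each fibre $v_\tau$ is an increasing bijection of $I_x$ onto $\R$, and in particular $v_\tau\to+\infty$ as $\tau\to b_x^-$. Suppose, for contradiction, that $b:=b_{x_0}<+\infty$ for some $x_0$. Along that fibre, completeness at the finite end $\tau=b$ means $\int^{b}\sqrt{v_{\tau\tau}(\tau,x_0)}\,d\tau=+\infty$; combined with $v_\tau\to+\infty$ and Harnack's inequality for the positive harmonic function $v_{\tau\tau}$ on balls inside $D$, one extracts that $v_{\tau\tau}$ blows up essentially like $(b-\tau)^{-2}$, so that $v(\tau,x)\sim-\log\big(b(x)-\tau\big)$ near this portion of $\partial D$. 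Feeding such a profile into $v_{\tau\tau}+\t_x v=0$ and reading off the coefficient of $\big(b(x)-\tau\big)^{-2}$ produces the relation $1+|\g b|^2=0$, which is impossible; the same argument excludes $a_{x_0}>-\infty$, so $I_x=\R$ for every $x$ and $D=\R^{n+1}$. I expect this to be the main obstacle, because completeness of $u_{tt}\,dt^2$ (that is, $\int^{\pm\infty}\sqrt{u_{tt}}\,dt=\infty$) is in general strictly weaker than $\int^{\pm\infty}u_{tt}\,dt=\infty$, i.e. than surjectivity of $t\mapsto u_t$ onto $\R$; it is the harmonicity of $v$—a global consequence of the equation—that must be combined with completeness to rule out a finite endpoint, and controlling the boundary behaviour of $v$ precisely enough is the delicate point.

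Once $D=\R^{n+1}$ is established the rest is immediate. Then $v$ is globally harmonic on $\R^{n+1}$, hence so is its second derivative $v_{\tau\tau}=1/u_{tt}$; being a positive harmonic function on $\R^{n+1}$ it is constant by the classical Liouville theorem, so $u_{tt}$ is constant. The final assertion then follows from the theorem of Section 2: any entire solution of \eqref{E-1} with $u_{tt}$ constant has the form \eqref{E-2-1} with $b$ harmonic and $g$ a solution of \eqref{E-2-3}.
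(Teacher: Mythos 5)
Your overall route is the same as the paper's: the partial Legendre transform in $t$ (the paper works directly with $\theta(z,x)=t(z,x)$, which is exactly your $v_\tau$), the computation showing that the transformed function is harmonic precisely when \eqref{E-1} holds, and then the Liouville theorem for the positive harmonic function $v_{\tau\tau}=1/u_{tt}$, followed by the Section~2 classification to get the form \eqref{E-2-1} with $b$ harmonic and $g$ solving \eqref{E-2-3}. Those parts of your argument are correct and match the paper.

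The genuine gap is in the middle step, where you try to show that the image domain $D$ is all of $\R^{n+1}$. Completeness only gives $\int^{b}\sqrt{v_{\tau\tau}(\tau,x_0)}\,d\tau=+\infty$; this divergence pins down no pointwise rate (for instance $v_{\tau\tau}$ could behave like $(b-\tau)^{-2}\log^2\frac{1}{b-\tau}$, or be large only along a sequence of $\tau$'s), so the assertion that $v_{\tau\tau}$ ``blows up essentially like $(b-\tau)^{-2}$'' does not follow. Harnack and the interior gradient estimate for positive harmonic functions give only upper bounds of the form $C\,d(\cdot,\partial D)^{-C_n}$, not matching lower bounds, and the geometry of $\partial D$ near $(b_{x_0},x_0)$ is controlled by the function $x\mapsto b_x=\sup_t u_t(t,x)$, which a priori is only lower semicontinuous; you need $b(x)$ to be $C^1$ even to write $|\nabla b|^2$, let alone to differentiate an asymptotic expansion $v\sim-\log(b(x)-\tau)$ in $x$ and ``read off the coefficient'' of $(b(x)-\tau)^{-2}$ after substituting into $v_{\tau\tau}+\Delta_x v=0$. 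As written this portion is a heuristic, not a proof. For comparison, the paper does not carry out such an analysis at all: it passes directly from the completeness hypothesis to the statement that $\Phi:(t,x)\mapsto(u_t,x)$ is a diffeomorphism onto $\R\times\R^n$, i.e.\ it reads completeness of $u_{tt}\,dt^2$ on each line as guaranteeing that $t\mapsto u_t(t,x)$ is onto $\R$. You are right that, taken literally, completeness ($\int\sqrt{u_{tt}}\,dt=\infty$) is weaker than surjectivity ($\int u_{tt}\,dt=\infty$), so you have correctly identified the one step the paper treats as immediate; but your attempt to supply that step is not rigorous, and that is the gap. If one grants the paper's reading of the hypothesis, the rest of your proof coincides with the paper's and is complete.
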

\begin{proof}
For any $x$ fixed, let $z=u_t(t, x)$. Then $\Phi: (t, x)\rightarrow (z, x)$ gives a transformation since $u_{tt}>0$ and the Jacobian of $\Phi$ is always positive. In particular, when $u_{tt}(x, t)dt^2$ is a complete metric on $\R\times \{x\}$, $\Phi: (t, x)\rightarrow (z, x)$ is a diffeomorphism from $\R\times \R^n$ to $\R\times \R^n$.  For $x$ fixed, there exists a unique $t=t(z, x)$ such that $z=u_{t}(t, x)$. Define a function $\theta(z, x)=t(z, x).$ We claim that $\theta$ is a harmonic function in $\R\times \R^n$.
The identity $z=u_t(\theta, x)$ implies
\[
\frac{\p \theta}{\p x_i}u_{tt}+u_{tx_i}=0,  u_{tt}\frac{\p \theta}{\p z}=1.
\]
It then follows that
\[
\begin{split}
&u_{tt}\frac{\p^2 \theta}{\p x_i^2}+2u_{ttx_i}\frac{\p \theta}{\p x_i}+u_{ttt}\left(\frac{\p \theta}{\p x_i}\right)^2+u_{tx_ix_i}=0,\\
&u_{tt}\frac{\p^2 \theta}{\p z^2}+\frac{u_{ttt}}{u^2_{tt}}=0.
\end{split}
\]
We compute, if $u$ solves \eqref{E-1},
\[
\begin{split}
\t_{(z, x)} \theta&=\frac{\p^2 \theta}{\p z^2}+\sum_i\frac{\p^2 \theta}{\p x_i^2}\\
&=\frac{1}{u_{tt}}\left(-\frac{u_{ttt}}{u_{tt}^2}-\t u_t+2\sum_i\frac{u_{ttx_i}u_{tx_i}}{u_{tt}}-\sum_i\frac{u_{ttt}u_{tx_i}^2}{u_{tt}^2}\right)\\
&=\frac{1}{u_{tt}}\left(-\frac{u_{ttt}}{u_{tt}^2}(1+\sum u_{tx_i}^2)-\t u_t+2\sum_i\frac{u_{ttx_i}u_{tx_i}}{u_{tt}}\right)\\
&=\frac{1}{u_{tt}}\left(-\frac{u_{ttt}\t u}{u_{tt}}-\t u_t+2\sum_i\frac{u_{ttx_i}u_{tx_i}}{u_{tt}}\right)\\
&=-\frac{1}{u_{tt}^2}\p_t (\t u u_{tt}-|\nabla u_t|^2)\\
&=0.
\end{split}
\]
On the other hand, $\p \theta/\p z=1/u_{tt}>0$. Hence $\p \theta/\p z$ is a positive harmonic function on $\R\times \R^n$. It follows that
$\p \theta/\p z$ is a constant, and so $u_{tt}$ is a constant. 
\end{proof}

One could  classify all solutions of \eqref{E-1} if one could prove that $u_{tt}$ decays to zero not too fast when $|t|\rightarrow \infty$ such that $u_{tt}dt^2$ defines a complete metric on a line.  This motivates the following, 
\begin{pro}\label{C-1}Do all solutions of \eqref{E-1} with $u_{tt}>0$ satisfy $u_{tt}=const$? 
\end{pro}

\bibliographystyle{amsplain}

\end{document}